\documentclass[12pt, a4paper]{article}

\usepackage[utf8]{inputenc}
\usepackage[T1]{fontenc}
\usepackage[english]{babel}

\usepackage[top=2.5cm, bottom=2.5cm, left=2.5cm, right=2.5cm]{geometry}
\usepackage{setspace}
\usepackage{amsmath}
\usepackage{amssymb}
\usepackage{amsthm}
\usepackage{mathtools}

\theoremstyle{plain}
\newtheorem{theoreme}{Theorem}[section]

\newtheorem{proposition}[theoreme]{Proposition}
\newtheorem{corollaire}[theoreme]{Corollary}

\theoremstyle{definition}
\newtheorem{definition}[theoreme]{Definition}
\newtheorem{exemple}[theoreme]{Example}

\theoremstyle{remark}
\newtheorem{remarque}[theoreme]{Remark}

\usepackage{graphicx}
\usepackage{float}
\usepackage{caption}
\usepackage{subcaption}
\usepackage{cite}

\usepackage{booktabs}
\usepackage{array}

\usepackage{hyperref}
\hypersetup{
    colorlinks = true,
    linkcolor  = blue!70!black,
    citecolor  = green!50!black,
    urlcolor   = cyan!70!black,
}
\usepackage[english]{cleveref}

\usepackage[numbers, sort&compress]{natbib}

\usepackage{microtype}   
\usepackage{enumitem}    
\usepackage{todonotes}   

\title{Rigidity of Nilpotent Lie Foliations:
  Cohomological Obstructions and Classification}
\author{Ameth Ndiaye \\ \small Department of Mathematics, FASTEF,
  Cheikh Anta Diop University, Dakar, Senegal\\ \small \texttt{ameth1.ndiaye@ucad.edu.sn}}
\date{\today}

\begin{document}

\maketitle

\begin{abstract}
In this article, we develop a systematic cohomological framework
    for the study of the rigidity of nilpotent Lie foliations
    with respect to solvable deformations. We introduce the deformation
    complex associated to a pair of Lie algebras $(\mathfrak{g}, \mathfrak{h})$ and show
    that the main obstruction to deforming a nilpotent Lie foliation into
    a non-nilpotent solvable foliation lies in the cohomology group
    $H^2(\mathfrak{g},\mathfrak{g}/[\mathfrak{g},\mathfrak{g}])$. We establish a necessary and sufficient
    algebraic criterion for rigidity within the family of foliations modelled
    on the generalized Heisenberg groups $H_{2k+1}$. This result unifies and
    generalizes the construction of Dathe--Ndiaye (2012) as well as its
    subsequent extensions. We complete the article with a full classification
    of nilpotent Lie foliations of codimension at most six according to their
    deformation behaviour.
\end{abstract}
\vspace{0.5em}
\noindent\textbf{MSC 2020:} 53C12, 17B30, 17B56, 22E25, 57R30.

\vspace{0.3em}
\noindent\textbf{Keywords:} Lie foliation, nilpotent Lie algebra, solvable deformation,
    Chevalley--Eilenberg cohomology, Heisenberg group, rigidity,
    transverse classification.

\section{Introduction}
The theory of Lie foliations, initiated by Fedida \cite{Fed71} and developed
by Molino \cite{Mol88}, constitutes a central area of differential geometry.
A foliation $\mathcal{F}$ on a compact manifold $M$ is called a
\emph{Lie foliation} modelled on a Lie group $G$ if its local holonomies
are given by left translations in $G$. The transverse structure of such a
foliation is entirely determined by the pair consisting of the Lie algebra
$\mathfrak{g}$ of $G$ and the structural Lie algebra $\mathfrak{h}$, a subalgebra of $\mathfrak{g}$.

The question of the \emph{deformability} of Lie foliations (that is,
the existence of continuous deformations into larger classes) is naturally
linked to the algebraic properties of the pair $(\mathfrak{g}, \mathfrak{h})$.
Dathe and Ndiaye showed in \cite{DN12} that there exists on a compact manifold
a non-abelian nilpotent Lie foliation of codimension $4$ which admits no
non-nilpotent solvable deformation. This striking result raises two fundamental
questions, which have remained open:

\begin{enumerate}
  \item Can one characterize \emph{algebraically} those nilpotent Lie foliations
        that are rigid with respect to solvable deformations?
  \item What is the structure of the deformation space of such a foliation in
        arbitrary codimension?
\end{enumerate}

The present article provides complete answers to both questions. Our approach
relies on the Chevalley--Eilenberg cohomology of Lie algebras with coefficients
in appropriate modules, and on a careful analysis of the long exact sequence
associated with the grading of the nilpotent Lie algebra.

More precisely, to every nilpotent Lie foliation $\mathcal{F}$ of codimension $q$
on $M$, modelled on a nilpotent Lie algebra $\mathfrak{g}$ with structural algebra
$\mathfrak{h}$, we associate the \emph{solvable deformation complex}:
\begin{eqnarray}
  0 \;\longrightarrow\; C^*(\mathfrak{g},\mathfrak{h};\,\mathfrak{g}/[\mathfrak{g},\mathfrak{g}])
    \;\longrightarrow\; C^*(\mathfrak{g},\,\mathfrak{g}/[\mathfrak{g},\mathfrak{g}])
    \;\longrightarrow\; C^*(\mathfrak{h},\,\mathfrak{g}/[\mathfrak{g},\mathfrak{g}])
    \;\longrightarrow\; 0,
\end{eqnarray}
whose degree-$2$ cohomology group, denoted $H^2(\mathfrak{g},\mathfrak{g}/[\mathfrak{g},\mathfrak{g}])$,
controls the existence of non-nilpotent solvable deformations. The central
result of this article is the following.

\begin{theoreme}\label{thm:A}
  Let $\mathcal{F}$ be a Lie foliation of codimension $q \geq 4$ on a compact
  manifold $M$, modelled on a generalized Heisenberg group $H_{2k+1}$ with
  $2k+1 \leq q$. Then $\mathcal{F}$ admits no non-nilpotent solvable deformation
  if and only if
  \begin{eqnarray}
    H^2\left(\mathfrak{h}_{2k+1}, \,\mathfrak{h}_{2k+1}/\mathfrak{z}\right) \neq 0,
  \end{eqnarray}
  where $\mathfrak{z}$ denotes the center of the Heisenberg algebra $\mathfrak{h}_{2k+1}$.
\end{theoreme}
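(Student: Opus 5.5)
The plan is to reduce the statement to a purely algebraic problem about deformations of the Lie bracket of $\mathfrak{h}_{2k+1}$, and then to compute the relevant cohomology explicitly.

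\textbf{Step 1: reduction to algebra.} By Fedida's theorem \cite{Fed71}, $\mathcal{F}$ is determined by a $\mathfrak{g}$-valued Maurer--Cartan form $\omega$ on the universal cover, with $\mathfrak{g}=\mathfrak{h}_{2k+1}$ (possibly enlarged by an abelian factor to reach codimension $q$). It is also determined by a holonomy representation $\rho\colon \pi_1(M)\to G$ with dense image. A solvable deformation $\mathcal{F}_t$ is then a smooth family of brackets $\mu_t$ on the underlying vector space $V$, with $\mu_0$ the Heisenberg bracket and each $\mu_t$ solvable. It comes with a compatible family of Maurer--Cartan forms $\omega_t$, so that $d\omega_t+\tfrac12\mu_t(\omega_t,\omega_t)=0$.

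Expanding $\mu_t=\mu_0+t\varphi+O(t^2)$ gives the usual Nijenhuis--Richardson cocycle condition on $\varphi$. I will then show that $\mu_t$ is non-nilpotent exactly when some $\mathrm{ad}_{\mu_t}x$ acquires a nonzero eigenvalue. For solvable $\mu_t$ this eigenvalue can only be detected on the abelianization. Since $[\mathfrak{g},\mathfrak{g}]=\mathfrak{z}$ for the Heisenberg algebra, the abelianization is $\mathfrak{g}/[\mathfrak{g},\mathfrak{g}]=\mathfrak{h}_{2k+1}/\mathfrak{z}$. Consequently, the part of $\varphi$ responsible for non-nilpotency is its image $\bar\varphi\in C^2(\mathfrak{g},\mathfrak{g}/\mathfrak{z})$. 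This is a cocycle for the induced module structure, which explains why the deformation complex of the introduction is the right one.

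\textbf{Step 2: the cohomology computation.} The adjoint action of $\mathfrak{h}_{2k+1}$ on $\mathfrak{h}_{2k+1}/\mathfrak{z}$ is trivial, since $[x,y]\in\mathfrak{z}$ for all $x,y$. Therefore
\[
H^2(\mathfrak{h}_{2k+1},\mathfrak{h}_{2k+1}/\mathfrak{z})\cong H^2(\mathfrak{h}_{2k+1},\mathbb{R})\otimes\mathbb{R}^{2k}.
\]
The factor $H^2(\mathfrak{h}_{2k+1},\mathbb{R})$ can be computed by the Hochschild--Serre sequence for $\mathfrak{z}\subset\mathfrak{h}_{2k+1}$, or directly, as the primitive part $\Lambda^2(\mathbb{R}^{2k})^*/\langle\omega_0\rangle$ together with the relevant $e^*\wedge$ terms. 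This determines exactly when the group vanishes, and hence makes the criterion of the theorem effective.

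\textbf{Step 3: the equivalence itself.} For the direction ``$H^2\neq0\Rightarrow$ rigid'', I will show that a nonzero class in $H^2(\mathfrak{h}_{2k+1},\mathfrak{h}_{2k+1}/\mathfrak{z})$ obstructs every non-nilpotent first-order term $\bar\varphi$. The mechanism is compactness of $M$ together with density of $\rho(\pi_1 M)$. These should force $\rho_t(\pi_1M)$ to remain in a unimodular group, and force the trace of $\mathrm{ad}$ on the abelianization to vanish. The aim is that these conditions, paired against the nonzero class, leave $\bar\varphi$ cohomologous to a cocycle with nilpotent semisimple part, so that $\mu_t$ stays nilpotent.

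For the converse, when the group vanishes I will construct an explicit non-nilpotent solvable deformation. The natural candidate is a derivation-type deformation $\mu_t=\mu_0+t\,D\wedge\cdot$, using a diagonal derivation $D$ of $\mathfrak{h}_{2k+1}$ that scales $\mathfrak{z}$. I will then realize it geometrically by suspending a dense subgroup, in the manner of \cite{DN12}.

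\textbf{Main obstacle.} The hardest step is the geometric half of Step 3: showing that the algebraic obstruction class really prevents the existence of a family $\omega_t$ on the fixed compact $M$, rather than merely obstructing formal first-order deformations. I would first try to integrate the variation $\dot\omega_0$ over $M$ against the transverse volume, to produce a cocycle representing $[\bar\varphi]$, and then use the Molino structure theory \cite{Mol88} to identify the structural algebra $\mathfrak{h}$ of the foliation (the subalgebra of $\mathfrak{g}$ from the introduction) with the closure data. Step 2 also requires care, since for $k\ge1$ the group is typically nonzero; the statement therefore implicitly asserts that these Heisenberg foliations are generically rigid. That consequence should be checked against \cite{DN12} before relying on Step 3.
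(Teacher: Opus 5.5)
The decisive step is missing. Your Step~2 formula is correct, and it already shows that the condition in the theorem always holds. Since the action is trivial, $H^2(\mathfrak{h}_{2k+1},\mathfrak{h}_{2k+1}/\mathfrak{z})\cong H^2(\mathfrak{h}_{2k+1},\mathbb{R})\otimes\mathbb{R}^{2k}$. Moreover, $\dim H^2(\mathfrak{h}_{2k+1},\mathbb{R})$ equals $2$ for $k=1$ and $k(2k-1)-1$ for $k\geq 2$, so the group never vanishes. (For $k=1$ it is $\mathbb{R}^4$, not the $\mathbb{R}^{k(2k-1)}=\mathbb{R}$ of Theorem~\ref{thm:H2}; only non-vanishing matters here.) Consequently the implication ``rigid $\Rightarrow H^2\neq 0$'' is trivial, and your construction ``when the group vanishes'' is never used. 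The whole content of Theorem~\ref{thm:A} is that \emph{every} such foliation is rigid, and for that you give no mechanism. A non-zero cohomology group cannot by itself obstruct a cocycle: in deformation theory it is the vanishing of $H^2$ that yields rigidity, while non-zero classes only leave room for deformations. The constraints you hope to extract from compactness and density are either empty or too weak. $\mathrm{ad}$ always induces zero on an abelianization, so the trace condition says nothing. Unimodularity does not force nilpotency either: the codimension-$4$ model $\mathfrak{h}_3\oplus\mathbb{R}W$ deforms to the Lie algebras $\mathfrak{g}_t$ given by $[X,Y]=Z$, $[W,X]=tX$, $[W,Y]=-tY$, which are unimodular, solvable and non-nilpotent for every $t\neq 0$.

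Step~1 is also incorrect as stated. The non-zero eigenvalues of $\mathrm{ad}_{\mu_t}x$ are those of its restriction to $[\mathfrak{g}_t,\mathfrak{g}_t]$, since $\mathrm{ad}_{\mu_t}x$ induces zero on $\mathfrak{g}_t/[\mathfrak{g}_t,\mathfrak{g}_t]$. In the example above, $[\mathfrak{g}_t,\mathfrak{g}_t]=\mathrm{span}(X,Y,Z)\supsetneq\mathfrak{z}$. In general the old center $\mathfrak{z}$ need not even be a $\mu_t$-ideal, so $\mathrm{ad}_{\mu_t}$ need not descend to $\mathfrak{g}/\mathfrak{z}$. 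More fundamentally, non-nilpotency is not a first-order property: replacing $t$ by $t^2$ in the family above gives a non-nilpotent solvable deformation whose first-order term $\varphi$ is zero, so $\bar\varphi$ cannot be ``the part responsible for non-nilpotency''.

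For comparison, the paper's proof stays entirely at the infinitesimal level. It uses two-step nilpotency ($[x,[y,[z,w]]]=0$) to kill $[\omega,[\omega,\cdot\,]]$ on $Z^2(\mathfrak{h},\mathfrak{h}/\mathfrak{z})$, and Nijenhuis--Richardson formal rigidity for the other implication. The same examples show that first-order data on the model algebra cannot settle the question, so that route would not close your gap either. Your instinct that global information about $\mathcal{F}$ must enter is sound: its holonomy $\rho(\pi_1M)$, the leaf closures, the compact basic manifold. But that argument is exactly what your Step~3 states as an aim rather than supplies, so the proposal does not prove the theorem.
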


This theorem unifies the result of Dathe--Ndiaye \cite{DN12} (case $q=4$, $k=1$)
with the generalization \cite{DN15} (arbitrary codimension) under an explicit
and computable cohomological condition.

\medskip
The article is organized as follows. Section \ref{sec:prelim} recalls the
fundamental definitions and results on Lie foliations and deformations.
Section \ref{sec:coho} develops the solvable deformation complex and computes
the relevant cohomology groups. Section \ref{sec:heis} is devoted to the
generalized Heisenberg family and to the proof of Theorem \ref{thm:A}.
Section \ref{sec:classif} establishes a complete classification in codimension
$\leq 6$.

\section{Preliminaries}
\label{sec:prelim}

\subsection{Lie Foliations}

Let $M$ be a compact smooth manifold of dimension $n$. A
\emph{foliation} $\mathcal{F}$ of codimension $q$ on $M$ is given by a foliated
atlas $\{(U_i, \varphi_i)\}$ such that the coordinate changes
$\varphi_j \circ \varphi_i^{-1}$ are local diffeomorphisms preserving
the product structure.

\begin{definition}\label{def:feuil-lie}
  A foliation $\mathcal{F}$ of codimension $q$ on $M$ is called a
  \emph{Lie foliation} modelled on a Lie group $G$ (of dimension $q$)
  if the transverse coordinate changes are restrictions of left translations
  in $G$. Formally, there exists a family of flat $\mathfrak{g}$-valued forms
  $\omega_i : TU_i \to \mathfrak{g}$ satisfying the Maurer--Cartan equation:
  \begin{eqnarray}
    d\omega_i + \tfrac{1}{2}[\omega_i,\omega_i] = 0.
  \end{eqnarray}
\end{definition}

Every Lie foliation has two fundamental algebraic data associated with it
(see Molino \cite{Mol88}):
\begin{itemize}
  \item The Lie algebra $\mathfrak{g}$ of the model group $G$, which determines the
        local transverse structure.
  \item The \emph{structural Lie algebra} $\mathfrak{h}$, a subalgebra of $\mathfrak{g}$ that is
        the Lie algebra of the holonomy group. It determines the global
        structure of the leaves.
\end{itemize}

\begin{definition}\label{def:nilp}
  A Lie foliation $\mathcal{F}$ is said to be \emph{nilpotent} (resp.\
  \emph{solvable}) if its model group $G$ is a nilpotent (resp.\ solvable)
  Lie group. It is said to be \emph{non-abelian} if $[\mathfrak{g},\mathfrak{g}] \neq 0$.
\end{definition}

\subsection{Deformations of Lie Foliations}

We recall the notion of deformation introduced in the context of
Lie foliations (see \cite{Ghy88,Sal97}).

\begin{definition}\label{def:deform}
  A \emph{smooth deformation} of a Lie foliation $\mathcal{F}$ on $M$ is a smooth
  family $\{\mathcal{F}_t\}_{t\in]-\varepsilon,\varepsilon[}$ of Lie foliations on $M$
  such that $\mathcal{F}_0 = \mathcal{F}$. We say that $\mathcal{F}$ is \emph{rigid} with respect
  to non-nilpotent solvable deformations if every smooth deformation
  $\{\mathcal{F}_t\}$ of solvable Lie foliations with $\mathcal{F}_0 = \mathcal{F}$ satisfies:
  $\mathcal{F}_t$ is nilpotent for $|t|$ small enough.
\end{definition}

At the infinitesimal level, a deformation of a Lie foliation is a
deformation of the flat $\mathfrak{g}$-form $\omega$ defining $\mathcal{F}$. We write
$\omega_t = \omega + t\eta + O(t^2)$, where $\eta : TM \to \mathfrak{g}'$ is a form
with values in the deformed algebra. The flatness condition gives, to first
order:
 \begin{eqnarray}
  d\eta + [\omega, \eta] = 0,
\end{eqnarray}
which means that $\eta$ is a cocycle in the complex
$\Omega^*(M, \rm{ad})$ of differential forms with values in the adjoint bundle.

\subsection{Nilpotent and Solvable Lie Algebras}

\begin{definition}\label{def:nil-res}
  For a Lie algebra $\mathfrak{g}$, define the \emph{lower central series}
  by $\mathfrak{g}^0 = \mathfrak{g}$, $\mathfrak{g}^{i+1} = [\mathfrak{g}, \mathfrak{g}^i]$. We say that $\mathfrak{g}$ is
  \emph{nilpotent of index} $s$ if $\mathfrak{g}^s = 0$ and $\mathfrak{g}^{s-1} \neq 0$.
  The \emph{derived series} is defined by $\mathfrak{g}^{(0)} = \mathfrak{g}$,
  $\mathfrak{g}^{(i+1)} = [\mathfrak{g}^{(i)}, \mathfrak{g}^{(i)}]$. The algebra $\mathfrak{g}$ is \emph{solvable}
  if $\mathfrak{g}^{(r)} = 0$ for some $r$.
\end{definition}

Recall that every nilpotent Lie algebra is solvable. The converse is false
in general, as shown by Borel algebras. Measuring this gap is precisely the
object of our study.

\begin{definition}[Generalized Heisenberg Algebra]\label{def:heis}
  For any integer $k \geq 1$, the \emph{generalized Heisenberg algebra}
  $\mathfrak{h}_{2k+1}$ is the Lie algebra of dimension $2k+1$ generated by
  $\{X_1,\ldots,X_k, Y_1,\ldots,Y_k, Z\}$ with the bracket relations:
   \begin{eqnarray}
    [X_i, Y_j] = \delta_{ij}\, Z, \qquad \text{all other brackets being zero.}
  \end{eqnarray}
  Its center is $\mathfrak{z} = \mathrm{Span}\{Z\}$ and its derived series satisfies
  $[\mathfrak{h}_{2k+1}, \mathfrak{h}_{2k+1}] = \mathfrak{z}$. It is nilpotent of index $2$.
\end{definition}

\section{Deformation Complex and Cohomological Obstructions}
\label{sec:coho}

\subsection{Chevalley--Eilenberg Cohomology}

Let $\mathfrak{g}$ be a finite-dimensional real Lie algebra and $V$ a $\mathfrak{g}$-module.
The \emph{Chevalley--Eilenberg complex} with values in $V$ is defined by:
 \begin{eqnarray}
  C^n(\mathfrak{g}, V) = \mathrm{Hom}\!\left(\wedge^n\mathfrak{g},\, V\right),
\end{eqnarray}
with differential $\delta : C^n(\mathfrak{g},V) \to C^{n+1}(\mathfrak{g},V)$ given by the
Koszul formula:
\begin{align*}
  (\delta f)(x_0,\ldots,x_n) &= \sum_{i=0}^n (-1)^i\, x_i \cdot
    f(x_0,\ldots,\hat{x}_i,\ldots,x_n) \\
  &\quad + \sum_{0 \leq i < j \leq n} (-1)^{i+j}\,
    f\!\left([x_i,x_j],\, x_0,\ldots,\hat{x}_i,\ldots,\hat{x}_j,\ldots,x_n\right).
\end{align*}
The corresponding cohomology groups are denoted $H^n(\mathfrak{g}, V)$. We will
primarily use the modules $V = \mathfrak{g}/[\mathfrak{g},\mathfrak{g}]$ (abelianized module)
and $V = \mathfrak{g}$ (adjoint module).

\subsection{The Solvable Deformation Complex}

Let $\mathfrak{g}$ be a nilpotent Lie algebra and $\mathfrak{h} \subset \mathfrak{g}$ a subalgebra.
We introduce the deformation complex adapted to our problem.

\begin{definition}\label{def:cdr}
  The \emph{solvable deformation complex} of the pair $(\mathfrak{g}, \mathfrak{h})$ is the
  short exact sequence of cochain complexes:
   \begin{eqnarray}
    0 \;\to\; C^*(\mathfrak{g},\mathfrak{h};\,\mathfrak{g}/[\mathfrak{g},\mathfrak{g}])
      \;\to\; C^*(\mathfrak{g},\,\mathfrak{g}/[\mathfrak{g},\mathfrak{g}])
      \;\xrightarrow{\;i^*\;}\; C^*(\mathfrak{h},\,\mathfrak{g}/[\mathfrak{g},\mathfrak{g}])
      \;\to\; 0,
  \end{eqnarray}
  where $C^*(\mathfrak{g},\mathfrak{h};\,-\,)$ denotes the relative complex of the pair.
\end{definition}

The associated long exact sequence in cohomology reads:
 \begin{eqnarray*}
  \cdots \to H^n(\mathfrak{g},\mathfrak{h};\,\mathfrak{g}/[\mathfrak{g},\mathfrak{g}]) \to H^n(\mathfrak{g},\,\mathfrak{g}/[\mathfrak{g},\mathfrak{g}])
  \to H^n(\mathfrak{h},\,\mathfrak{g}/[\mathfrak{g},\mathfrak{g}]) \xrightarrow{\;\partial\;}
  H^{n+1}(\mathfrak{g},\mathfrak{h};\,\mathfrak{g}/[\mathfrak{g},\mathfrak{g}]) \to \cdots
\end{eqnarray*}

\begin{proposition}\label{prop:bord}
  The boundary map
  $\partial : H^1(\mathfrak{h},\mathfrak{g}/[\mathfrak{g},\mathfrak{g}]) \to H^2(\mathfrak{g},\mathfrak{h};\mathfrak{g}/[\mathfrak{g},\mathfrak{g}])$
  is the infinitesimal obstruction to extending a solvable deformation of the
  restriction $\mathcal{F}|_\mathfrak{h}$ to a deformation of $\mathcal{F}$ as a whole.
\end{proposition}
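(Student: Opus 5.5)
The plan is to make precise what ``infinitesimal obstruction'' means and then identify it with the connecting homomorphism of the long exact sequence from Definition~\ref{def:cdr}. Write $V=\mathfrak{g}/[\mathfrak{g},\mathfrak{g}]$; this is a trivial $\mathfrak{g}$-module, since $[\mathfrak{g},\mathfrak{g}]$ acts trivially on it and $\mathfrak{g}$ acts via the adjoint action modulo the derived algebra. At first order, a solvable deformation of the restriction $\mathcal{F}|_\mathfrak{h}$ is given by a Maurer--Cartan perturbation $\eta_\mathfrak{h}$, as in the first-order flatness equation of the preliminaries. The component of $\eta_\mathfrak{h}$ that can make the deformed algebra non-nilpotent is its projection to $V$: adding to the bracket a term in the abelianization direction is the only way to destroy nilpotency while keeping solvability. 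Concretely, I would encode the first-order datum as a cochain $c\in C^1(\mathfrak{h},V)$. Flatness of the deformed structure on $\mathfrak{h}$ gives $\delta c=0$, and changing the deformation by a gauge transformation (an infinitesimal automorphism) changes $c$ by a coboundary. So deformations of $\mathcal{F}|_\mathfrak{h}$ modulo equivalence give classes $[c]\in H^1(\mathfrak{h},V)$.

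Next, extending the deformation to $\mathcal{F}$ means finding $\tilde c\in C^1(\mathfrak{g},V)$ with $i^*\tilde c=c$ and $\delta\tilde c=0$. A lift always exists, because $i^*$ is surjective: restriction of multilinear forms from $\mathfrak{g}$ to the subalgebra $\mathfrak{h}$ is onto once we pick a complement. Since $i^*\delta\tilde c=\delta c=0$, the element $\delta\tilde c$ lies in the relative complex $C^2(\mathfrak{g},\mathfrak{h};V)$, and it is a relative cocycle. Its class is exactly $\partial[c]$ by the snake-lemma construction. I will check that this class is independent of the chosen lift: two lifts differ by a relative $1$-cochain, which changes $\delta\tilde c$ by a relative coboundary. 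It is also independent of the representative $c$.

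Then I prove both directions. If $\partial[c]=0$, then $\delta\tilde c=\delta r$ for some relative $r$, and $\tilde c-r$ is a cocycle restricting to $c$; this gives the first-order extension. Conversely, any cocycle extension $\tilde c$ gives $\partial[c]=[\delta\tilde c]=0$. Exactness at $H^1(\mathfrak{h},V)$ then says that the extendable classes are precisely $\ker\partial=\operatorname{im}(H^1(\mathfrak{g},V)\to H^1(\mathfrak{h},V))$.

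The main obstacle is the first step: justifying rigorously that the geometric first-order deformation $\eta$, valued in the adjoint bundle on $M$, reduces to an algebraic class in $H^1(\mathfrak{h},V)$. This needs the Molino structure theory to pass from $\Omega^*(M,\mathrm{ad})$ to Lie algebra cohomology of the structural algebra, which is the holonomy closure. It also needs an argument that the nilpotent part of the deformation, valued in $[\mathfrak{g},\mathfrak{g}]$, is unobstructed or irrelevant to solvable non-nilpotency. I would try to handle this by using the filtration of $\mathfrak{g}$ by $[\mathfrak{g},\mathfrak{g}]$ and projecting the deformation cocycle to the graded quotient $V$. I would accept this reduction as the definition of the infinitesimal datum, so that the proposition becomes the homological identification above.
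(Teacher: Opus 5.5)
Your argument is sound as homological algebra, but it takes a different route from the paper.

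\textbf{Your route.} You take the first-order datum on $\mathfrak{h}$ to be a cocycle $c\in Z^1(\mathfrak{h},V)$, and an extension to be a cocycle lift $\tilde c\in Z^1(\mathfrak{g},V)$. You then run the snake-lemma chase: $\delta\tilde c\in\ker i^*$ represents $\partial[c]$ independently of choices, and a cocycle lift exists iff $\partial[c]=0$. Hence $\ker\partial=\operatorname{im}\bigl(H^1(\mathfrak{g},V)\to H^1(\mathfrak{h},V)\bigr)$.

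\textbf{The paper's route.} It encodes a deformation of the pair by $(\alpha,\beta)\in C^2(\mathfrak{g},V)\times C^1(\mathfrak{h},V)$ with $\delta\alpha=0$ (Jacobi to first order) and $i^*\alpha=\delta\beta$. This is a degree-$2$ cocycle of the mapping cone of $i^*$. It phrases the obstruction as the vanishing of $[i^*\alpha]$ in $H^2(\mathfrak{h},V)$ and equates this with $\partial[\beta]=0$.

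\textbf{What each buys.} The paper's encoding has the merit of talking about bracket deformations, i.e.\ about deforming the model algebra itself. But $\delta\beta=i^*\alpha$ is generally nonzero, so $[\beta]$ is not a class and the identification with $\partial$ is not justified. What that argument really addresses is $\ker\bigl(H^2(\mathfrak{g},V)\to H^2(\mathfrak{h},V)\bigr)$, one step further along the long exact sequence. Your encoding lives in exactly the degrees named in the statement. It gives a complete proof (well-definedness and both directions) once the passage from geometric deformations to a class in $H^1(\mathfrak{h},V)$ is taken as a definition. That passage is the same kind of unproved identification the paper makes. Your chase also implicitly uses $C^*(\mathfrak{g},\mathfrak{h};V)=\ker i^*$. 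This is what exactness in Definition~\ref{def:cdr} forces; the Hochschild--Serre relative complex would not fit into that sequence.

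\textbf{Two cautions on your motivating paragraph.}

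First, it mixes degrees. A perturbation of the Maurer--Cartan form is a degree-$1$ object (your $c$), whereas ``adding a term to the bracket'' is a degree-$2$ cochain.

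Second, the claim that only bracket terms in the abelianization direction can destroy nilpotency while keeping solvability is false. So the auxiliary argument you planned (that the $[\mathfrak{g},\mathfrak{g}]$-valued part is irrelevant) cannot be supplied. In $\mathfrak{h}_3$, add $[X,Z]=tZ$ to $[X,Y]=Z$. The Jacobi identity reduces to $t[Y,Z]=0$, so it holds. The result is $\mathfrak{aff}(1)\oplus\mathbb{R}$, solvable and non-nilpotent for $t\neq 0$, yet the added term is $\mathfrak{z}$-valued and vanishes in $V$. So keep your final move, taking the $V$-valued class as the infinitesimal datum by definition. Do not present it as capturing every non-nilpotent solvable deformation.
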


\begin{proof}
  A solvable deformation of the pair $(\mathfrak{g}, \mathfrak{h})$ is given by a family
  $\{(\mathfrak{g}_t,\mathfrak{h}_t)\}$ with $(\mathfrak{g}_0,\mathfrak{h}_0) = (\mathfrak{g},\mathfrak{h})$ and $\mathfrak{g}_t$ solvable
  for $t \neq 0$. To first order, such a deformation is given by an element
  $(\alpha, \beta) \in C^2(\mathfrak{g},\mathfrak{g}/[\mathfrak{g},\mathfrak{g}]) \times C^1(\mathfrak{h},\mathfrak{g}/[\mathfrak{g},\mathfrak{g}])$
  satisfying $\delta\alpha = 0$ and $i^*\alpha = \delta\beta$, where
  $i^* : C^*(\mathfrak{g},-) \to C^*(\mathfrak{h},-)$ is the restriction map. The cocycle
  condition for $\alpha$ expresses the Jacobi identity to first order, while
  $i^*\alpha = \delta\beta$ encodes the compatibility with the deformation on
  $\mathfrak{h}$. Such a pair $(\alpha,\beta)$ exists if and only if the class
  $[i^*\alpha]$ vanishes in $H^2(\mathfrak{h},\mathfrak{g}/[\mathfrak{g},\mathfrak{g}])$, which is precisely
  the condition $\partial[\beta] = 0$ by definition of the boundary map.
\end{proof}

\subsection{Computation of $H^2(\mathfrak{h}_{2k+1},\,\mathfrak{h}_{2k+1}/\mathfrak{z})$}

We now explicitly compute the obstruction group for the Heisenberg family.
This computation is central to the proof of Theorem \ref{thm:A}.

\begin{theoreme}\label{thm:H2}
  For every integer $k \geq 1$, we have the isomorphism:
   \begin{eqnarray}
    H^2\!\left(\mathfrak{h}_{2k+1},\;\mathfrak{h}_{2k+1}/\mathfrak{z}\right) \;\cong\; \mathbb{R}^{k(2k-1)}.
  \end{eqnarray}
  In particular, $H^2(\mathfrak{h}_{2k+1},\mathfrak{h}_{2k+1}/\mathfrak{z}) \neq 0$ for all $k \geq 1$.
\end{theoreme}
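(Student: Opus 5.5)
The plan is to reduce the computation to scalar cohomology and then carry out an explicit degree-$2$ count using the Hochschild--Serre (Gysin) sequence of the central extension $0\to\mathfrak{z}\to\mathfrak{h}_{2k+1}\to\mathbb{R}^{2k}\to 0$. Throughout I write $V=\mathfrak{h}_{2k+1}/\mathfrak{z}$. I flag at the outset that a faithful count appears to disagree with the stated dimension, so the final step is not a formality.

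\textbf{Step 1 (the module is trivial).} For $x\in\mathfrak{h}_{2k+1}$ we have $\mathrm{ad}_x(\mathfrak{h}_{2k+1})\subset[\mathfrak{h}_{2k+1},\mathfrak{h}_{2k+1}]=\mathfrak{z}$ by Definition~\ref{def:heis}. Hence the induced action of $\mathfrak{h}_{2k+1}$ on $V$ is zero, and the Koszul differential on $C^*(\mathfrak{h}_{2k+1},V)$ is $\delta\otimes\mathrm{id}_V$. This gives
\[
H^2(\mathfrak{h}_{2k+1},V)\cong H^2(\mathfrak{h}_{2k+1},\mathbb{R})\otimes V .
\]
The problem is therefore reduced to computing $H^2(\mathfrak{h}_{2k+1},\mathbb{R})$.

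\textbf{Step 2 (scalar cohomology).} Let $x_i,y_i,z$ be the dual basis. Then $\delta z=-\sum_i x_i\wedge y_i=-\omega$, and $\delta$ vanishes on $x_i$ and $y_i$. Every $2$-cochain decomposes as $\alpha+\beta\wedge z$ with $\alpha\in\wedge^2(\mathbb{R}^{2k})^*$ and $\beta\in(\mathbb{R}^{2k})^*$.
\begin{itemize}[label=$\circ$]
\item The cocycle condition reads $\beta\wedge\omega=0$. Since $\omega$ is nondegenerate on $\mathbb{R}^{2k}$, this forces $\beta=0$ when $k\ge2$, while for $k=1$ it imposes no condition.
\item Coboundaries of $1$-cochains are exactly $\mathbb{R}\,\omega$.
\end{itemize}
So the basic, $\mathfrak{z}$-independent part contributes $\wedge^2(\mathbb{R}^{2k})^*$, of dimension $\binom{2k}{2}=k(2k-1)$, modulo the single class $[\omega]$.

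\textbf{Step 3 (matching the stated count).} The number $k(2k-1)$ in Theorem~\ref{thm:H2} is precisely $\dim\wedge^2(\mathfrak{h}_{2k+1}/\mathfrak{z})^*$, that is, the space of basic $2$-cochains before quotienting by $[\omega]$ and before tensoring with $V$. I expect this step to be the main obstacle. The literal statement, read with $V$ as a $2k$-dimensional trivial module, gives $\dim H^2(\mathfrak{h}_{2k+1},V)=2k\bigl(k(2k-1)-1\bigr)$ for $k\ge2$ and $4$ for $k=1$, which is not $k(2k-1)$.

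My plan at this point is to make precise which normalization the author intends, namely that $H^2$ is computed on cochains basic with respect to $\mathfrak{z}$ and counted per coefficient direction. I would first test this reading against the case $k=1$ treated in \cite{DN12}. If no such reading reproduces $k(2k-1)$ exactly, the isomorphism as stated cannot be established by this route, and only the following weaker conclusion survives:
\[
H^2(\mathfrak{h}_{2k+1},\mathfrak{h}_{2k+1}/\mathfrak{z})\neq0\quad\text{for all }k\ge1 .
\]
This nonvanishing already follows from Steps 1--2, since $k(2k-1)-1>0$ for $k\ge2$ and $\dim H^2(\mathfrak{h}_3,\mathbb{R})=2$.
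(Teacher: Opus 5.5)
Your Steps 1 and 2 are correct, and the discrepancy you flag in Step 3 is real. The isomorphism in Theorem~\ref{thm:H2} is false, so that step cannot be completed, and there is no point looking for a normalization that rescues $k(2k-1)$. The induced action on $V=\mathfrak{h}_{2k+1}/\mathfrak{z}$ is trivial, so $H^2(\mathfrak{h}_{2k+1},V)\cong H^2(\mathfrak{h}_{2k+1},\mathbb{R})\otimes V$. Your count $\dim H^2(\mathfrak{h}_{2k+1},V)=2k\bigl(k(2k-1)-1\bigr)$ for $k\ge 2$, and $4$ for $k=1$, is right.

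The test case $k=1$ that you propose settles the matter. By your Step 2 the cocycle condition is vacuous there, so $Z^2(\mathfrak{h}_3,V)=C^2(\mathfrak{h}_3,V)$ is $6$-dimensional. Your $B^2(\mathfrak{h}_3,V)=\mathbb{R}\,\omega\otimes V$ is $2$-dimensional. Hence $H^2(\mathfrak{h}_3,\mathfrak{h}_3/\mathfrak{z})\cong\mathbb{R}^4$, not $\mathbb{R}$. For $k=2$ one gets $\mathbb{R}^{20}$, not $\mathbb{R}^6$, so Corollary~\ref{cor:dims} and Remark~\ref{rem:DN} inherit wrong values.

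The paper's proof starts exactly as yours does: trivial action and $(\delta f)(x,y)=-f([x,y])$. Its coboundary count $\dim B^2(\mathfrak{h},V)=2k$ also agrees with yours. The error is the unsupported ``direct computation'' $\dim Z^2(\mathfrak{h},V)=k(2k-1)+2k$.

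In fact $Z^2(\mathfrak{h},V)=Z^2(\mathfrak{h},\mathbb{R})\otimes V$. Every $V$-valued $2$-cochain vanishing on $\mathfrak{z}$ is a cocycle, because all brackets land in $\mathfrak{z}$. For $k\ge 2$ these are all the cocycles, by your Lefschetz argument applied componentwise in $V$. This gives $\dim Z^2(\mathfrak{h},V)=2k\cdot k(2k-1)$ for $k\ge 2$, and $6$ when $k=1$.

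The paper's figure $k(2k-1)+2k=\binom{2k+1}{2}$ coincides with $\dim\wedge^2\mathfrak{h}^*$. In effect, the proof counts scalar $2$-cochains without imposing the cocycle condition and then subtracts $V$-valued coboundaries, which is inconsistent.

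The only natural group of dimension $k(2k-1)$ here is the one you isolate, $\wedge^2(\mathfrak{h}/\mathfrak{z})^*\cong H^2(\mathfrak{h},\mathfrak{z};\mathbb{R})$. That group has scalar coefficients and is relative to the center, so adopting it would change the statement rather than prove it. What does hold is the corrected dimension formula above. With it comes the nonvanishing $H^2(\mathfrak{h}_{2k+1},\mathfrak{h}_{2k+1}/\mathfrak{z})\neq 0$ for all $k\ge 1$, which your Steps 1--2 already prove.
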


\begin{proof}
  Write $\mathfrak{h} = \mathfrak{h}_{2k+1}$ and $V = \mathfrak{h}/\mathfrak{z} \cong \mathbb{R}^{2k}$. The action of $\mathfrak{h}$
  on $V$ is trivial since $[\mathfrak{h},\mathfrak{h}] = \mathfrak{z} \subset \ker(\pi)$, where
  $\pi : \mathfrak{h} \to V$ is the canonical projection. The Chevalley--Eilenberg
  differential is therefore simply:
   \begin{eqnarray}
    (\delta f)(x,y) = -f([x,y]) \qquad \text{for } f \in C^1(\mathfrak{h}, V).
  \end{eqnarray}

  \textbf{Computation of $Z^2(\mathfrak{h},V)$.} An element $f \in C^2(\mathfrak{h},V) = \mathrm{Hom}(\wedge^2\mathfrak{h}, V)$
  is a 2-cocycle if and only if:
   \begin{eqnarray}
    f([x,y],z) - f([x,z],y) + f([y,z],x) = 0
    \qquad \text{for all } x,y,z \in \mathfrak{h}.
  \end{eqnarray}
  Given the relations $[X_i,Y_j] = \delta_{ij}Z$ and all other brackets zero,
  the cocycle condition decomposes according to the grading of $\mathfrak{h}$.
  A direct computation shows that
   \begin{eqnarray}
    \dim Z^2(\mathfrak{h}, V) = k(2k-1) + 2k.
  \end{eqnarray}

  \textbf{Computation of $B^2(\mathfrak{h},V)$.} For $g \in C^1(\mathfrak{h},V)$, we have
  $(\delta g)(x,y) = -g([x,y])$. The coboundaries are determined by the values
  of $g$ on $[\mathfrak{h},\mathfrak{h}] = \mathfrak{z}$. Since $\dim V = 2k$ and $\dim \mathfrak{z} = 1$, the
  space of coboundaries has dimension $2k$.

  \textbf{Conclusion.} We obtain:
   \begin{eqnarray}
    \dim H^2(\mathfrak{h}, V) = (k(2k-1)+2k) - 2k = k(2k-1).
  \end{eqnarray}
  For $k \geq 1$, we have $k(2k-1) \geq 1$, hence $H^2(\mathfrak{h},V) \cong \mathbb{R}^{k(2k-1)} \neq 0$.
\end{proof}

\begin{corollaire}\label{cor:dims}
  For $k=1$, the obstruction group is $H^2(\mathfrak{h}_3, \mathfrak{h}_3/\mathfrak{z}) \cong \mathbb{R}$.
  For $k=2$, one obtains $H^2(\mathfrak{h}_5, \mathfrak{h}_5/\mathfrak{z}) \cong \mathbb{R}^6$. These dimensions
  grow quadratically in $k$, reflecting the increasing complexity of rigidity.
\end{corollaire}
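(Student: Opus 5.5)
The corollary follows from Theorem~\ref{thm:H2} by specialization, so the plan is to substitute into the dimension formula $\dim H^2(\mathfrak{h}_{2k+1},\mathfrak{h}_{2k+1}/\mathfrak{z}) = k(2k-1)$ proved there.

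\textbf{Step 1 (the two values).} For $k=1$ we get $1\cdot(2-1)=1$, so $H^2(\mathfrak{h}_3,\mathfrak{h}_3/\mathfrak{z})\cong\mathbb{R}$. For $k=2$ we get $2\cdot 3=6$, so $H^2(\mathfrak{h}_5,\mathfrak{h}_5/\mathfrak{z})\cong\mathbb{R}^6$. In both cases the isomorphism type over $\mathbb{R}$ is determined by the dimension, so nothing further is needed.

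\textbf{Step 2 (quadratic growth).} The function $k\mapsto k(2k-1)=2k^2-k$ is a polynomial of degree $2$ with positive leading coefficient. Consecutive differences are $4k+1$, so the dimension increases strictly with $k$, and $k(2k-1)\sim 2k^2$. The phrase ``reflecting the increasing complexity of rigidity'' is interpretive and needs no proof beyond this growth statement.

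\textbf{Main obstacle.} The only substantive point is that the corollary inherits the correctness of Theorem~\ref{thm:H2}. I would therefore cross-check that theorem independently before relying on it. Because the action of $\mathfrak{h}_{2k+1}$ on $V=\mathfrak{h}_{2k+1}/\mathfrak{z}$ is trivial, one has
\[
H^2(\mathfrak{h}_{2k+1},V)\cong H^2(\mathfrak{h}_{2k+1},\mathbb{R})\otimes V .
\]
The dimension of the left-hand side should therefore be $2k\cdot b_2(\mathfrak{h}_{2k+1})$, where $b_2$ is the second Betti number of the Heisenberg algebra, computable from the Hochschild--Serre spectral sequence for the central extension by $\mathfrak{z}$.

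Already for $k=1$, $b_2(\mathfrak{h}_3)=2$. The classes are represented by $Y^*\wedge Z^*$ and $X^*\wedge Z^*$. This gives dimension $4$ rather than $1$, so the cross-check may expose a discrepancy with Theorem~\ref{thm:H2}. I would resolve that discrepancy first, by rechecking the count of $Z^2$ and $B^2$ in that proof, before asserting the numerical values in the corollary.
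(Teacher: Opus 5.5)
Your Steps 1--2 follow the paper's route exactly: the corollary is just $k=1,2$ substituted into the formula $k(2k-1)$ of Theorem~\ref{thm:H2}. Your cross-check, however, is not merely a possible discrepancy to be ``resolved first''. It is correct, and it shows that Theorem~\ref{thm:H2}, and hence the corollary, is false. So Steps 1--2 establish nothing, and you should commit to the conclusion your own computation forces.

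For $\mathfrak{h}_3$, the only cocycle condition comes from the triple $(X,Y,Z)$ and reads $f(Z,Z)=0$, which is vacuous. So every $f\in C^2(\mathfrak{h}_3,V)$ is a cocycle and $\dim Z^2=3\cdot 2=6$. Coboundaries $\delta g(x,y)=-g([x,y])$ are exactly the cochains vanishing on $X\wedge Z$ and $Y\wedge Z$, with $f(X,Y)=-g(Z)$ arbitrary in $V$. Hence $\dim B^2=2$ and $H^2(\mathfrak{h}_3,\mathfrak{h}_3/\mathfrak{z})\cong\mathbb{R}^4$, not $\mathbb{R}$.

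For $k\geq 2$, write $\mathfrak{h}^*=W^*\oplus\mathbb{R}Z^*$ with $W^*=\mathrm{Span}\{X_i^*,Y_i^*\}$, $\sigma=\sum_i X_i^*\wedge Y_i^*$ and $dZ^*=-\sigma$. Then $d(\alpha\wedge Z^*)=\alpha\wedge\sigma$ is injective on $W^*$ (Lefschetz, since $k\geq 2$). So the closed scalar $2$-forms are $\Lambda^2W^*$, the exact ones are $\mathbb{R}\sigma$, and $b_2(\mathfrak{h}_{2k+1})=\binom{2k}{2}-1$. With your tensor identity this gives $\dim H^2(\mathfrak{h}_{2k+1},\mathfrak{h}_{2k+1}/\mathfrak{z})=2k\bigl(k(2k-1)-1\bigr)=2k(k-1)(2k+1)$. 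That is $20$ rather than $6$ for $k=2$, and the growth is cubic, not quadratic.

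The error in the proof of Theorem~\ref{thm:H2} is its count $\dim Z^2=k(2k-1)+2k=\binom{2k+1}{2}$. This is $\dim\Lambda^2\mathfrak{h}^*$, the space of all \emph{scalar} $2$-cochains, with neither the factor $\dim V=2k$ nor the cocycle condition imposed. By contrast, $\dim B^2=2k$ is correctly computed with values in $V$. Subtracting a scalar count from a $V$-valued one produces the spurious $k(2k-1)$. Only the qualitative conclusion $H^2\neq 0$ survives. The values $\mathbb{R}$ and $\mathbb{R}^6$ and the quadratic growth asserted in the corollary (and reused in Remark~\ref{rem:DN} and Table~\ref{tab:classif}) cannot be proved, because they are false.
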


\section{Heisenberg Foliations and Proof of Theorem \ref{thm:A}}
\label{sec:heis}

\subsection{Construction of Heisenberg Foliations}

We explicitly construct the Lie foliations modelled on the generalized
Heisenberg groups. These constructions generalize the foundational example
of Dathe--Ndiaye \cite{DN12}.

The \emph{generalized Heisenberg group} $H_{2k+1}$ is the simply connected
nilpotent Lie group with Lie algebra $\mathfrak{h}_{2k+1}$. In coordinates, it is
identified with $\mathbb{R}^{2k+1}$ equipped with the group law:
 \begin{eqnarray}
  (x, y, t) \cdot (x', y', t') = \left(x+x',\; y+y',\; t+t' + \sum_{i=1}^k x_i y_i'\right).
\end{eqnarray}
The corresponding Maurer--Cartan form is:
 \begin{eqnarray}
  \omega = \sum_{i=1}^k e_i \otimes dx_i \;+\; \sum_{j=1}^k f_j \otimes dy_j
           \;+\; Z \otimes \!\left(dt - \sum_{i=1}^k x_i\, dy_i\right),
\end{eqnarray}
where $\{e_i, f_j, Z\}$ is a basis of $\mathfrak{h}_{2k+1}$ with $[e_i, f_j] = \delta_{ij}Z$.

\begin{proposition}\label{prop:const}
  For every cocompact lattice $\Gamma \subset H_{2k+1}$ and every closed
  subgroup $K$ of codimension $q$ in $H_{2k+1}$, the foliation $\mathcal{F}_{\Gamma,K}$
  on $M = \Gamma \backslash H_{2k+1}$ is a non-abelian nilpotent Lie foliation
  of codimension $q \geq 2k+1$ on a compact manifold.
\end{proposition}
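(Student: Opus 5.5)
The plan is to realize $\mathcal{F}_{\Gamma,K}$ as the foliation of $M=\Gamma\backslash H_{2k+1}$ by the projections of the right cosets $gK$. I would then check, in order: compactness of $M$, the fact that the coset foliation is a genuine foliation, the Lie (Maurer--Cartan) transverse structure of Definition~\ref{def:feuil-lie}, and finally nilpotency and non-abelianness in the sense of Definition~\ref{def:nilp}. Compactness is immediate, since $\Gamma$ is cocompact by hypothesis. It is worth recalling that such $\Gamma$ exist, for instance $\Gamma=\mathbb{Z}^{2k+1}$ with the group law written above, by Malcev's criterion, because the structure constants $[e_i,f_j]=\delta_{ij}Z$ are rational. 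The manifold $M$ is smooth because $\Gamma$ is discrete and acts freely by left translations.

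Next, the right action of $K$ on $H_{2k+1}$ commutes with the left action of $\Gamma$. Hence the partition of $H_{2k+1}$ into cosets $gK$, which is a smooth foliation since $K$ is a closed Lie subgroup, is $\Gamma$-invariant and descends to a foliation $\mathcal{F}_{\Gamma,K}$ of $M$. For the transverse structure, I would use the explicit Maurer--Cartan form $\omega$ displayed above. It is left-invariant, so it descends to $M$. Composing it with the projection $\mathfrak{h}_{2k+1}\to\mathfrak{h}_{2k+1}/\mathfrak{k}$, where $\mathfrak{k}$ is the Lie algebra of $K$, gives a form whose kernel is exactly the tangent distribution of the cosets.

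In the main case $K$ is normal; for example, take $K$ trivial, or take $K$ contained in the centre and use the model $H_{2k+1}/K$. In that case the quotient form satisfies $d\bar\omega+\tfrac12[\bar\omega,\bar\omega]=0$, because this identity holds for $\omega$ and the projection is a Lie algebra morphism. Thus $\mathcal{F}_{\Gamma,K}$ is a Lie foliation whose model algebra is a quotient of $\mathfrak{h}_{2k+1}$. That model is nilpotent, as a quotient of a nilpotent algebra. It is non-abelian as long as $Z\notin\mathfrak{k}$, since then $[e_1,f_1]=Z\neq 0$ survives in the quotient.

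The main obstacle is the codimension bookkeeping. The codimension of $K$ in $H_{2k+1}$ is at most $2k+1$, so the literal requirement $q\ge 2k+1$ forces $q=2k+1$, that is, $K$ discrete. In that case the foliation is the foliation by points modelled on $H_{2k+1}$ itself, and it is non-abelian. To cover genuinely larger $q$ in the spirit of \cite{DN12}, I would replace $H_{2k+1}$ by $H_{2k+1}\times\mathbb{R}^{m}$ with $m=q-2k-1$, and take a lattice in a larger nilpotent group $N\supset H_{2k+1}\times\mathbb{R}^m$ whose projection onto the model is dense. The foliation is then given by the fibres of the homomorphism $N\to H_{2k+1}\times\mathbb{R}^m$. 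It is a nilpotent Lie foliation of codimension $q$, non-abelian because the $\mathfrak{h}_{2k+1}$ factor survives. Checking that such a lattice with dense projection exists, which is needed for the structural algebra to be as claimed, is where I expect the real work to lie. I would try a rational form of $\mathfrak{h}_{2k+1}\oplus\mathbb{R}^m$ twisted by an irrational Galois conjugate, as in the classical construction of Lie foliations with dense leaves.
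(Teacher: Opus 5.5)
Your argument is correct and is essentially the paper's: the left-invariant Maurer--Cartan form $\omega$ descends to the compact manifold $\Gamma\backslash H_{2k+1}$, is flat, and has the nilpotent non-abelian model $\mathfrak{h}_{2k+1}$. Your observation that $q\ge 2k+1$ forces $K$ to be discrete, so that $\mathcal{F}_{\Gamma,K}$ is the point foliation cut out by $\omega$ itself (which has zero kernel), is exactly the case the paper's proof treats implicitly without ever using $K$. Your side remarks do not affect this. Note, though, that a normal $K$ of positive dimension always contains $\exp(\mathbb{R}Z)$, because a nonzero ideal of a nilpotent algebra meets the one-dimensional centre; so your normal/central branch gives a non-abelian model only for discrete $K$. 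Also, the $H_{2k+1}\times\mathbb{R}^m$ construction changes the manifold and lies outside the proposition as stated.
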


\begin{proof}
  The compactness of $M = \Gamma \backslash H_{2k+1}$ follows from the cocompactness
  of $\Gamma$ (Malcev's theorem \cite{Mal51}). The foliation is defined by the
  $\mathfrak{h}_{2k+1}$-valued Maurer--Cartan form $\omega$ on $H_{2k+1}$, which is
  left-invariant and descends to $M$ by $\Gamma$-invariance. The condition
  $d\omega + \frac{1}{2}[\omega,\omega] = 0$ is satisfied by construction.
  The foliation is nilpotent since $H_{2k+1}$ is nilpotent. It is non-abelian
  since $[\mathfrak{h}_{2k+1},\mathfrak{h}_{2k+1}] = \mathfrak{z} \neq 0$.
\end{proof}

\subsection{Proof of Theorem \ref{thm:A}}

\begin{proof}[Proof of Theorem \ref{thm:A}]

\textbf{($\Leftarrow$) Sufficient condition.}
Assume $H^2(\mathfrak{h}_{2k+1},\mathfrak{h}_{2k+1}/\mathfrak{z}) \neq 0$. We show that no
non-nilpotent solvable deformation exists.

Suppose for contradiction that there exists a smooth deformation
$\{\omega_t\}_{t\in]-\varepsilon,\varepsilon[}$ of the Maurer--Cartan form
$\omega_0 = \omega$ towards a non-nilpotent solvable foliation for $t \neq 0$.
Write $\omega_t = \omega + t\alpha + O(t^2)$. The flatness condition
$d\omega_t + \frac{1}{2}[\omega_t,\omega_t] = 0$ gives to first order:
$$
  d\alpha + [\omega,\alpha] = 0,
$$
which means that $\alpha$ is a cocycle in the deformation complex.

For the deformed foliation to be non-nilpotent solvable, the first-order term
$\alpha$ must induce a non-zero element in
$H^2(\mathfrak{h}_{2k+1},\mathfrak{h}_{2k+1}/\mathfrak{z})$ via the canonical projection. Analyzing
the composition:
 \begin{eqnarray}
  [\omega,[\omega,\cdot\,]] \;:\; C^2(\mathfrak{h}, \mathfrak{h}/\mathfrak{z}) \;\longrightarrow\; C^4(\mathfrak{h},\mathfrak{h}/\mathfrak{z}).
\end{eqnarray}
A direct computation on the generators of $\mathfrak{h}_{2k+1}$ shows that this
composition vanishes on $Z^2(\mathfrak{h},\mathfrak{h}/\mathfrak{z})$. Indeed, the nilpotency of index $2$
of $\mathfrak{h}_{2k+1}$ ensures that $[x,[y,[z,w]]] = 0$ for all $x,y,z,w \in \mathfrak{h}$,
since $[z,w] \in \mathfrak{z}$ which is central. It is therefore impossible for
$\omega_t$ to be non-nilpotent solvable to first order, contradicting our
assumption.

\medskip
\textbf{($\Rightarrow$) Necessary condition.}
Suppose that $\mathcal{F}$ admits a non-nilpotent solvable deformation $\{\mathcal{F}_t\}$.
Then the Lie algebra $\mathfrak{g}_t$ is non-nilpotent solvable for $t \neq 0$.
The deformation of the Lie algebra structure induces a non-zero map in
cohomology:
 \begin{eqnarray}
  \left.\frac{\partial}{\partial t}\right|_{t=0} [\omega_t]
  \;\in\; H^2(\mathfrak{h}_{2k+1},\mathfrak{h}_{2k+1}) \;\longrightarrow\; H^2(\mathfrak{h}_{2k+1},\mathfrak{h}_{2k+1}/\mathfrak{z}).
\end{eqnarray}
For this deformation to break nilpotency, it must be non-zero in
$H^2(\mathfrak{h}_{2k+1},\mathfrak{h}_{2k+1}/\mathfrak{z})$. Indeed, if the deformation were zero in
this group, it would remain nilpotent by a formal power series argument and
the formal rigidity theorem \cite{NR67}. Hence
$H^2(\mathfrak{h}_{2k+1},\mathfrak{h}_{2k+1}/\mathfrak{z}) \neq 0$.
\end{proof}

\begin{remarque}\label{rem:DN}
  The case $k=1$ corresponds exactly to the result of Dathe--Ndiaye \cite{DN12}:
  the three-dimensional Heisenberg algebra gives a rigid foliation of codimension
  $q=4$. The condition $H^2(\mathfrak{h}_3,\mathfrak{h}_3/\mathfrak{z}) \cong \mathbb{R} \neq 0$ is indeed
  verified by Corollary \ref{cor:dims}.
\end{remarque}

\begin{remarque}\label{rem:lehmann}
  Theorem \ref{thm:A} applies in particular to the \emph{Lehmann foliation},
  obtained as the special case $k=1$, $\Gamma = \Gamma_0$ (standard integer
  lattice). This case was treated directly in \cite{Dat16} by different methods.
\end{remarque}

\section{Classification in Codimension $\leq 6$}
\label{sec:classif}

We recall the classification (up to isomorphism) of real nilpotent Lie algebras
of dimension $\leq 6$ that will be used as transverse models. This classification
is due to Umlauf \cite{Uml91} and Morozov \cite{Mor58}.

In dimensions $1$ and $2$, the only nilpotent algebras are abelian. In
dimension $3$, the only non-abelian nilpotent algebra is $\mathfrak{h}_3$ (Heisenberg).
In dimension $4$, in addition to products, the filiform algebra $\mathfrak{n}_4$ appears
with $[e_1,e_2]=e_3$, $[e_1,e_3]=e_4$. In dimension $5$, there are $9$ classes.
In dimension $6$, there are $34$.
\begin{table}[ht]
  \centering
  \caption{Non-abelian nilpotent Lie algebras of dimension $\leq 6$
           with their obstruction cohomology groups.}
  \label{tab:classif}
  \renewcommand{\arraystretch}{1.15}
  \begin{tabular}{@{}llcc@{}}
    \toprule
    \textbf{Notation} & \textbf{Non-zero relations} & $H^2(\mathfrak{g}, \mathfrak{g}/[\mathfrak{g},\mathfrak{g}])$ & \textbf{Rigidity} \\
    \midrule
    $\mathfrak{h}_3$         & $[X,Y]=Z$                           & $\mathbb{R}^1$  & Yes (Thm. \ref{thm:A}) \\
    $\mathfrak{n}_4$         & $[e_1,e_2]=e_3$, $[e_1,e_3]=e_4$   & $0$     & No \\
    $\mathfrak{h}_3 \oplus \mathbb{R}$ & $[X,Y]=Z$                         & $\mathbb{R}^1$  & Yes \\
    $\mathfrak{h}_5$         & $[X_i,Y_j]=\delta_{ij}Z$ ($i,j\leq 2$) & $\mathbb{R}^6$ & Yes (Thm. \ref{thm:A}) \\
    $\mathfrak{n}_{5,1}$     & $[e_1,e_2]=e_4$, $[e_1,e_3]=e_5$   & $\mathbb{R}^2$  & Yes \\
    $\mathfrak{n}_{5,2}$     & $[e_1,e_2]=e_3$, $[e_1,e_3]=e_4$, $[e_1,e_4]=e_5$ & $0$ & No \\
    $\mathfrak{h}_3 \oplus \mathfrak{h}_3$ & $[X,Y]=Z$, $[X',Y']=Z'$       & $\mathbb{R}^2$  & Yes \\
    $\mathfrak{h}_5 \oplus \mathbb{R}$ & $[X_i,Y_j]=\delta_{ij}Z$          & $\mathbb{R}^6$  & Yes \\
    $\mathfrak{n}_{6,1}$     & $[e_1,e_2]=e_5$, $[e_3,e_4]=e_6$   & $\mathbb{R}^2$  & Yes \\
    $\mathfrak{n}_{6,2}$     & $[e_1,e_j]=e_{j+1}$ ($2\leq j\leq 5$) & $0$  & No \\
    \bottomrule
  \end{tabular}
\end{table}

Based on Theorem \ref{thm:A} and Table \ref{tab:classif}, we establish the
complete classification.

\begin{definition}\label{def:classes}
  Let $\mathcal{F}$ be a nilpotent Lie foliation. We say that $\mathcal{F}$ is of:
  \begin{itemize}
    \item \emph{Class I} if it admits a non-nilpotent solvable deformation;
    \item \emph{Class II} if it is rigid (no non-nilpotent solvable deformation);
    \item \emph{Class III} if it admits a deformation towards a foliation with
          discrete holonomy.
  \end{itemize}
\end{definition}

\begin{theoreme}\label{thm:classif}
  Let $\mathcal{F}$ be a non-abelian nilpotent Lie foliation of codimension
  $q \leq 6$ on a compact manifold $M$, modelled on a non-abelian nilpotent
  Lie algebra $\mathfrak{g}$. Then:
  \begin{enumerate}[label=(\roman*)]
    \item If $H^2(\mathfrak{g},\mathfrak{g}/[\mathfrak{g},\mathfrak{g}]) \neq 0$ \emph{(Class II)}, $\mathcal{F}$ is
          rigid with respect to solvable deformations. This occurs for the
          models $\mathfrak{h}_3$, $\mathfrak{h}_3\oplus\mathbb{R}$, $\mathfrak{h}_5$, $\mathfrak{n}_{5,1}$,
          $\mathfrak{h}_3\oplus\mathfrak{h}_3$, $\mathfrak{h}_5\oplus\mathbb{R}$, $\mathfrak{n}_{6,1}$.
    \item If $H^2(\mathfrak{g},\mathfrak{g}/[\mathfrak{g},\mathfrak{g}]) = 0$ \emph{(Class I)}, $\mathcal{F}$ admits
          non-nilpotent solvable deformations. This occurs for the filiform
          models $\mathfrak{n}_4$, $\mathfrak{n}_{5,2}$, $\mathfrak{n}_{6,2}$.
    \item Every Class II foliation is also of Class III.
  \end{enumerate}
\end{theoreme}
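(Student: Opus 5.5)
The plan is to treat the three assertions separately, working model by model through Table~\ref{tab:classif}. Since every entry of the table has a concrete bracket presentation, each claim reduces to one of two tasks: either an argument uniform in the nilpotency step, or an explicit construction. Before anything else I will recompute the column $H^2(\mathfrak{g},\mathfrak{g}/[\mathfrak{g},\mathfrak{g}])$ directly. The module $\mathfrak{g}/[\mathfrak{g},\mathfrak{g}]$ is always a trivial $\mathfrak{g}$-module, so $H^2(\mathfrak{g},\mathfrak{g}/[\mathfrak{g},\mathfrak{g}])\cong H^2(\mathfrak{g},\mathbb{R})\otimes\mathbb{R}^{\dim\mathfrak{g}/[\mathfrak{g},\mathfrak{g}]}$. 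This shortcut is also a sanity check on the vanishing entries for the filiform models, and I expect it to disagree with the table there, which would force the dichotomy in (i)--(ii) to be restated.

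For (i) I will not go through the cohomology value at all. Instead I will reuse the sufficiency half of the proof of Theorem~\ref{thm:A}. The only structural input in that argument is that the model is $2$-step nilpotent: $[x,[y,[z,w]]]=0$ because $[\mathfrak{g},\mathfrak{g}]$ is central. Every Class~II model listed ($\mathfrak{h}_3$, $\mathfrak{h}_3\oplus\mathbb{R}$, $\mathfrak{h}_5$, $\mathfrak{n}_{5,1}$, $\mathfrak{h}_3\oplus\mathfrak{h}_3$, $\mathfrak{h}_5\oplus\mathbb{R}$, $\mathfrak{n}_{6,1}$) is $2$-step, so the first-order analysis of $\omega_t=\omega+t\alpha+O(t^2)$ transfers verbatim. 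The Heisenberg cases are already covered by Theorem~\ref{thm:A} together with Theorem~\ref{thm:H2}. For the direct sums I will also check that the obstruction class decomposes compatibly with the Künneth splitting.

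For (ii) I will give explicit deformations. Each filiform model in the table has the form $\mathfrak{g}=\mathbb{R}e_1\ltimes_N\mathbb{R}^{m-1}$, where $N=\mathrm{ad}\,e_1|_{\mathbb{R}^{m-1}}$ is a single nilpotent Jordan block. I will define $\mathfrak{g}_t=\mathbb{R}e_1\ltimes_{N+tI}\mathbb{R}^{m-1}$. Jacobi holds automatically because this is a semidirect product by one endomorphism, so $\mathfrak{g}_t$ is solvable with derived algebra in $\mathbb{R}^{m-1}$. For $t\neq0$, $\mathrm{ad}\,e_1$ has nonzero eigenvalue $t$, so $\mathfrak{g}_t$ is not nilpotent. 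At the level of foliations, I will use Fedida's description: the foliation is given by a $\pi_1(M)$-equivariant developing submersion $D:\widetilde M\to G$ with holonomy $h:\pi_1(M)\to G$. I will deform the pair $(D,h)$ to $(D_t,h_t)$ with values in $G_t$, and the pulled-back Maurer--Cartan forms $\omega_t$ then give the family $\mathcal{F}_t$.

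I expect the main obstacle to be this globalization step. On a nilmanifold model $\Gamma\backslash G$, the group $G_t$ typically admits no lattice close to $\Gamma$, since unimodularity fails for $N+tI$. So I will first try to keep the holonomy inside the nilpotent part $\exp(\mathbb{R}^{m-1})$ and deform only the $e_1$-direction of the developing map, on a suspension-type model where the holonomy group is not cocompact in $G$.

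For (iii), given a Class~II foliation $\mathcal{F}_{\Gamma,K}$ on $\Gamma\backslash G$ as in Proposition~\ref{prop:const}, I will use Malcev's rational structure on $\mathfrak{g}$ determined by $\Gamma$. I will approximate $\mathrm{Lie}(K)$ by a family of subalgebras $\mathfrak{k}_t$ that are rational for a dense set of parameters, for instance by rotating a basis of $\mathrm{Lie}(K)$ toward $\Gamma$-rational vectors inside the Grassmannian of subalgebras. This is possible because the $2$-step condition makes the subalgebra condition polynomial and defined over $\mathbb{Q}$. For rational $\mathfrak{k}_t$, the intersection $\exp(\mathfrak{k}_t)\cap\Gamma$ is a lattice in $\exp(\mathfrak{k}_t)$, so the leaves are closed and the holonomy is discrete. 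This gives the deformation required for Class~III.
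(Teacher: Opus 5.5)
\textbf{Part (i), and the statement itself.} Your trivial-module remark is correct, and it already breaks the statement. Since $\mathfrak{g}$ acts trivially on $\mathfrak{g}/[\mathfrak{g},\mathfrak{g}]$, one has $H^2(\mathfrak{g},\mathfrak{g}/[\mathfrak{g},\mathfrak{g}])\cong H^2(\mathfrak{g},\mathbb{R})\otimes\mathbb{R}^{\dim\mathfrak{g}/[\mathfrak{g},\mathfrak{g}]}$. Dixmier's theorem ($\dim H^2(\mathfrak{g},\mathbb{R})\geq 2$ for nilpotent $\mathfrak{g}$ of dimension $\geq 3$) makes this nonzero for every non-abelian nilpotent model. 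For instance, $H^2(\mathfrak{n}_4,\mathbb{R})$ is spanned by $[e_1^*\wedge e_4^*]$ and $[e_2^*\wedge e_3^*]$. Consequently:
\begin{itemize}
\item the hypothesis of (ii) is never met;
\item the clause ``this occurs for $\mathfrak{n}_4,\mathfrak{n}_{5,2},\mathfrak{n}_{6,2}$'' is false;
\item (i) as written would declare the filiform models rigid too.
\end{itemize}
The paper's proof just reads (i) and (ii) off the table, so it does not address this either.

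Replacing the hypothesis of (i) by ``$\mathfrak{g}$ is $2$-step'', as you implicitly do, changes the theorem, and your argument for that version has a genuine gap: the sufficiency half of the proof of Theorem~\ref{thm:A} proves nothing. The identity $[x,[y,[z,w]]]=0$ concerns the undeformed bracket and says nothing about $\mathfrak{g}_t$. Non-nilpotency of $\mathfrak{g}_t$ is not even a first-order phenomenon (replace $t$ by $t^2$). Your own construction in (ii) refutes the transfer. The algebras $\mathfrak{h}_3=\mathbb{R}X\ltimes_N\langle Y,Z\rangle$, $\mathfrak{h}_3\oplus\mathbb{R}=\mathbb{R}X\ltimes_N\langle Y,Z,W\rangle$ and $\mathfrak{n}_{5,1}=\mathbb{R}e_1\ltimes_N\langle e_2,\dots,e_5\rangle$ are semidirect products of an abelian ideal by a nilpotent $N$. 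Nothing in your argument uses that $N$ is a single Jordan block, so $N+tI$ deforms these $2$-step models into non-nilpotent solvable algebras exactly as for the filiform ones. Any rigidity must come from global input (compactness of $M$, the holonomy representation), and your plan for (i) never uses any.

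\textbf{Parts (ii) and (iii).} Your family $\mathbb{R}e_1\ltimes_{N+tI}\mathbb{R}^{m-1}$ is correct at the algebra level. The paper's $[e_2,e_3]=te_4$ is not: it leaves $\mathfrak{n}_4$ nilpotent, with $\mathfrak{g}^1=\langle e_3,e_4\rangle$, $\mathfrak{g}^2=\langle e_4\rangle$, $\mathfrak{g}^3=0$. However, your globalization fails. Let $\phi\colon G_t\to\mathbb{R}$ be the quotient by the ideal $\mathbb{R}^{m-1}$.
\begin{itemize}
\item If $h_t(\pi_1M)\subset\ker\phi$, then $\phi\circ D_t$ is $\pi_1M$-invariant and descends to a submersion $M\to\mathbb{R}$, impossible on a compact manifold. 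The same argument at $t=0$, using the homomorphism of the nilpotent model group onto $\mathbb{R}$ killing $\exp(\mathbb{R}^{m-1})$, shows that no foliation on a compact manifold, not even the initial one, has holonomy in $\exp(\mathbb{R}^{m-1})$.
\item If instead $\pi_1M$ is nilpotent, as for the paper's $\Gamma\backslash N_4$, pick $\gamma$ with $\phi(h_t(\gamma))=s\neq 0$. Since $e^{s(N+tI)}-I$ is invertible, iterated commutators with $h_t(\gamma)$ show that $h_t(\pi_1M)$ meets $\ker\phi$ trivially and lies in a conjugate of $\exp(\mathbb{R}e_1)$. Its closure is therefore not cocompact. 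But Fedida's theorem ($D_t$ is a fibration onto $G_t$ when $M$ is compact) forces $\overline{h_t(\pi_1M)}\backslash G_t$ to be compact.
\end{itemize}
So such $M$ carry no Lie $G_t$-foliation at all for $t\neq 0$. Moreover, (ii) is a claim about every foliation with a filiform model, so a special suspension model could at best give an example.

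Part (iii) has the same scope problem, since you treat only homogeneous $\mathcal{F}_{\Gamma,K}$. The density of rational approximants is also unjustified. To stay among Lie foliations you must approximate by ideals, and being cut out by equations over $\mathbb{Q}$ (which has nothing to do with being $2$-step) does not make rational points dense. For example, take $\Gamma=H_3(\mathbb{Z}[\sqrt2])$ embedded in $H_3\times H_3$ through the two real embeddings, and $K=\{1\}\times H_3$. Every $3$-dimensional ideal near $0\oplus\mathfrak{h}_3$ has the form $\{(c(b),b)\}$ for a linear $c\colon\mathfrak{h}_3\to\mathfrak{z}$ vanishing on $\mathfrak{z}$. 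None of these is rational for the structure defined by $\Gamma$.
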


\begin{proof}
  Points (i) and (ii) follow directly from Theorem \ref{thm:A} and the
  computations in Table \ref{tab:classif}. For the filiform algebras
  $\mathfrak{n}_{n,1}$ (those for which $H^2(\mathfrak{g},\mathfrak{g}/[\mathfrak{g},\mathfrak{g}])=0$), we explicitly
  construct a solvable deformation. Consider for instance $\mathfrak{n}_4$ with
  basis $\{e_1,e_2,e_3,e_4\}$ and $[e_1,e_2]=e_3$, $[e_1,e_3]=e_4$. Define
  the deformed algebra $\mathfrak{n}_4^t$ for $t > 0$ by adding the relation
  $[e_2,e_3] = t\,e_4$. A direct computation shows that $\mathfrak{n}_4^t$ is
  non-nilpotent solvable and that the deformation is smooth. This proves (ii).

  For (iii), the foliations modelled on $\mathfrak{h}_{2k+1}$ on a nilmanifold
  $M = \Gamma \backslash H_{2k+1}$ always admit a deformation towards a
  foliation with discrete holonomy: it suffices to take a linear deformation
  of the Maurer--Cartan form towards a closed $1$-form defining a fibration
  of $M$ over a torus \cite{DR05}. This deformation exists since
  $H^1(M,\mathbb{R}) \neq 0$ for compact nilmanifolds.
\end{proof}
Let us now give some explicit examples.
\begin{exemple}[Heisenberg foliation in codimension 4]\label{ex:heis4}
  Let $\Gamma = H_3(\mathbb{Z})$ be the integer lattice in $H_3$, and
  $M = \Gamma \backslash H_3$ the three-dimensional Heisenberg nilmanifold.
  Consider the product $N = M \times \mathbb{T}$ where $\mathbb{T} = \mathbb{R}/\mathbb{Z}$. Let
  $\omega = (\omega_1, \omega_2, \omega_3, d\theta)$ where $\omega_i$ are the
  Maurer--Cartan forms on $H_3$ and $d\theta$ is the volume form on $\mathbb{T}$.
  The kernel of $\omega$ defines a nilpotent Lie foliation of codimension $4$
  on $N$, which is of Class II by Theorem \ref{thm:A}.
\end{exemple}

\begin{exemple}[Deformable filiform foliation in codimension 4]\label{ex:fil4}
  Consider the algebra $\mathfrak{n}_4$ and a cocompact lattice $\Gamma$ in the
  corresponding Lie group $N_4$. The Lie foliation of codimension $4$ on
  $M = \Gamma \backslash N_4$ is of Class I. Explicitly, the family
  $\{\omega_t\}$ with $\omega_t = \omega + t \cdot e_2^* \wedge e_3^*$ (where
  $e_i^*$ is the dual basis) defines a deformation towards the non-nilpotent
  solvable Lie algebra $\mathfrak{r}_4$, the Lie algebra of the similarity group of
  the plane.
\end{exemple}

\end{document}